\newtheorem{theorem}{Theorem}[section]
\newtheorem{proposition}[theorem]{Proposition}
\theoremstyle{remark}
\newtheorem{remark}[theorem]{Remark}
\theoremstyle{definition}
\newcommand{\C}{\mathbb{C}}
\newcommand{\R}{\mathbb{R}}
\newcommand{\Z}{\mathbb{Z}}
\newcommand{\Q}{\mathbb{Q}}
\newcommand{\A}{\mathcal A}
\newcommand{\mQ}{\mathcal{Q}}
\newcommand{\F}{\mathbb{F}}
\numberwithin{equation}{section}
\begin{document}

\title[On non-geometric augmentations in high dimensions]{On non-geometric augmentations in high dimensions}

\author{Roman Golovko}

\begin{abstract}
In this note we construct augmentations of Chekanov-Eliashberg algebras of certain high dimensional Legendrian submanifolds that are not induced by exact Lagrangian fillings.
The obstructions to the existence of exact Lagrangian fillings that we use are Seidel's isomorphism and the injectivity of a certain algebraic map between the corresponding augmentation varieties proven by Gao and Rutherford. In addition, along the way we discuss the relation between augmentation varieties of Legendrian submanifolds and their spherical spuns (Proposition \ref{augeqaug0}).
\end{abstract}

\address{Faculty of Mathematics and Physics, Charles University, Sokolovsk\'{a} 83, 18000 Praha 8, Czech Republic}
\email{golovko@karlin.mff.cuni.cz}
\date{\today}
\thanks{}
\subjclass[2010]{Primary 53D12; Secondary 53D42}

\keywords{Legendrian submanifold, Chekanov-Eliashberg algebra, augmentation, exact Lagrangian filling}

\maketitle

\section{Introduction and main results}

It is natural to study symplectic manifolds with contact boundary by studying Lagrangian submanifolds with Legendrian boundary; in particular one can study exact Lagrangian fillings of Legendrian submanifolds.
In this paper we consider closed Legendrian submanifolds $\Lambda$ in the standard contact vector space $\R^{2n+1}_{st}:=(\R^{2n+1}, \alpha_{st}:=dz-\Sigma_i y_idx_i)$ and their exact Lagrangian fillings, i.e. smooth cobordisms $(L; \emptyset,
\Lambda)$ and Lagrangian embeddings $L\hookrightarrow(\R\times \R^{2n+1}, d(e^t\alpha_{st}))$ satisfying
$
L|_{[T_{L},\infty)\times \R^{2n+1}} = [T_{L},\infty)\times \Lambda
$
for some $T_{L}\gg 0$, $L^{c}:=L|_{(-\infty, T_{L}]\times
\R^{2n+1}}$ is compact, and there is $f\in C^{\infty}(L)$ which is constant on $[T_{L},\infty) \times \Lambda$ satisfying $df=e^t\alpha_{st}$.

Legendrian contact homology is a modern invariant of
Legendrian submanifolds in  $\R^{2n+1}_{st}$ which is a variant of the
symplectic field theory (SFT) introduced by Eliashberg, Givental, and
Hofer in \cite{EGHSFT}. For Legendrian submanifolds in
$\R^3_{st}$, it was defined by Chekanov in \cite{CHDGA} and the version of Legendrian contact homology for
Legendrian submanifolds of $\R^{2n+1}_{st}$ was developed by
Ekholm--Etnyre--Sullivan in \cite{EESLSIRHD, EkholmEtnyreSullivannoniso}.

Legendrian contact homology is a homology of the differential graded algebra (DGA) $\A(\Lambda)$, which is called the {\em Chekanov-Eliashberg DGA of $\Lambda$} or {\em Legendrian contact homology DGA of $\Lambda$}.
Chekanov--Eliashberg DGA $\A(\Lambda)$ is defined to be the non-commutative unital differential graded algebra freely generated by the set of Reeb chords of $\Lambda$, denoted by $\mQ(\Lambda)$, it is defined over a unital ring $R$.
The differential $\partial(a)$ on $a \in \mQ(\Lambda)$ is defined  by a count of rigid pseudo-holomorphic disks for some choice of compatible almost complex structure, and is then extended using the Leibniz rule. The homology of the Chekanov-Eliashberg algebra is
called the {\em
Legendrian contact homology of $\Lambda$}. Following the result of Ekholm--Etnyre--Sullivan in \cite{EESLSIRHD}, Legendrian contact homology is independent of the
choice of an almost complex structure and is invariant under Legendrian
isotopy. When $\A(\Lambda)$ is defined over a unital ring $R$, we will sometimes write it as $\A_{R}(\Lambda)$.

An {\em augmentation} is a unital DGA-morphism
$\varepsilon : \mathcal  A(\Lambda) \to (\F, 0)$
which thus satisfies $\varepsilon\circ \partial = 0$. In our constructions we will use only graded augmentations  which
by definition vanish on all generators in nonzero degrees.

The following fact can be seen as a consequence of the discussion in \cite{EkhomHonaKalmancobordisms}: An exact Lagrangian filling $L$ of $\Lambda$
gives rise to a unital DGA morphism
$\varepsilon: \mathcal A(\Lambda) \to \F_2$
defined by an appropriate count of rigid pseudoholomorphic discs with boundary on the
filling. Even though the result in \cite{EkhomHonaKalmancobordisms} is written for $\F_2$-coefficients only, it admits a
natural extension to more general coefficients. In this work we will only consider spin Maslov number $0$ exact Lagrangian cobordisms of  spin Maslov number $0$ Legendrian submanifolds of the standard contact vector space, where we will always choose the spin structure on a Legendrian which is a restriction of the spin structure of the exact Lagrangian filling. In this case there is a  map $\varepsilon: \mathcal A(\Lambda) \to \F$ for an arbitrary field $\F$, and all the homology groups that we will consider will have a $\Z$-grading.

There are a few obstructions to the existence of an exact Lagrangian filling which induces a given augmentation, see \cite{ChantraineLagrConc, DimitroglouRizellLiftingPSH, EkholmRatSFT, EkholmLekiliduality, nonfilLegkninthe3sph, NFAugoTK, GolovkonoteLagrCob}, and quite a few examples of augmentations of Legendrian knots that are not induced by exact Lagrangian fillings (i.e.,  are {\em non-geometric}).
We would like to construct non-geometric augmentations  for high dimensional Legendrian submanifolds.

\subsection{Seidel's isomorphism}

One of the most well-known obstruction comes from the so-called Seidel's isomorphism.
In \cite{EkholmRatSFT}, Ekholm outlined an isomorphism, first conjectured by Seidel,
which relates the linearised Legendrian contact cohomology of a Legendrian and the singular homology of its embedded exact Lagrangian filling. The details of this isomorphism were later completed in the work of Dimitroglou Rizell, see
\cite{DimitroglouRizellLiftingPSH}:

\begin{theorem}[\cite{DimitroglouRizellLiftingPSH, EkholmRatSFT}]
\label{SeidelIso}
Given a Legendrian submanifold $\Lambda\subset \R^{2n+1}$ and its exact Lagrangian filling $L$ of Maslov number $0$. For the augmentation $\varepsilon_L:\A_{\F_2}(\Lambda)\to (\F_2, 0)$ induced by $L$, there is an isomorphism
$$LCH^{n-i}_{\varepsilon_L}(\Lambda;\F_2)\simeq H_i(L; \F_2).$$
\end{theorem}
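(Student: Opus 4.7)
The plan is to construct an explicit geometric chain map $\Phi$ from the linearized Chekanov--Eliashberg cochain complex computing $LCH^*_{\varepsilon_L}(\Lambda;\F_2)$ to a Morse cochain complex of $L$ computing $H^{*}(L,\Lambda;\F_2)$, prove that $\Phi$ is a quasi-isomorphism, and then apply Poincaré--Lefschetz duality $H^{n+1-i}(L,\Lambda;\F_2)\cong H_i(L;\F_2)$ for the oriented $(n+1)$-manifold with boundary $(L,\Lambda)$ to conclude.

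To define $\Phi$, view $L$ as an exact Lagrangian in the symplectization $(\R\times\R^{2n+1},d(e^t\alpha_{st}))$ with cylindrical positive end over $\Lambda$, fix a generic cylindrical compatible almost complex structure $J$, and pick a Morse function $f: L\to\R$ adapted to the cylindrical end so that its Morse cohomology computes $H^*(L,\Lambda;\F_2)$. For each Reeb chord $c$ of $\Lambda$ and each critical point $p$ of $f$, let $n(c,p)\in\F_2$ count rigid $J$-holomorphic discs with boundary on $L$, a single positive puncture asymptotic to $c$, and one boundary marked point lying in the stable manifold of $p$; set $\Phi(c)=\sum_p n(c,p)\,p$. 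The Ekholm--Etnyre--Sullivan index formula gives expected dimension $|c|+1-\OPN{ind}(p)$, so $\Phi$ shifts cohomological degree by $+1$ and in particular sends a chord of degree $n-i$ into Morse-cochain degree $n+1-i$. The chain-map identity follows from the codimension-one degenerations of the corresponding $1$-dimensional moduli spaces: Morse flow line breakings at the marked point reproduce the Morse differential on the target, while SFT splittings at the positive end (a disc with one additional negative Reeb puncture, capped by an $L$-bounding disc contributing a factor of $\varepsilon_L$) reproduce the $\varepsilon_L$-linearized differential on the source. Sphere and disc bubbling are excluded by exactness of $L$.

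To show $\Phi$ is a quasi-isomorphism, one proves that the mapping cone $\OPN{Cone}(\Phi)$ is acyclic. The conceptually cleanest route is to interpret this cone as a wrapped Floer-type complex $CF^*(L,L')$ in a Liouville completion, where $L'$ is a small Hamiltonian push-off of $L$: its action filtration has associated graded given by the Morse cochain complex of $L$ together with a shifted copy of the linearized cochain complex of $\Lambda$, and the total complex is acyclic by an exactness/displaceability argument in the completion. Equivalently, Ekholm's original outline builds a direct null-homotopy of $\OPN{Cone}(\Phi)$ by stretching the neck along a contact-type hypersurface near the positive end; Dimitroglou Rizell's principal contribution is to establish transversality for the marked-point moduli spaces and complete the SFT compactness argument so that all codimension-one breakings are correctly enumerated. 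This analytic work is the main obstacle; once handled, combining the resulting quasi-isomorphism with Poincaré--Lefschetz duality gives the stated isomorphism.
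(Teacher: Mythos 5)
This theorem is imported by the paper from \cite{DimitroglouRizellLiftingPSH, EkholmRatSFT} and is not proved there, so there is no in-paper argument to compare against; the relevant comparison is with the proof in those references. Your outline is a faithful account of that proof: one counts rigid discs in the symplectization with boundary on $L$, one positive puncture, and a boundary marked point constrained to a stable manifold, obtaining a degree-shifting map $\Phi$ from the $\varepsilon_L$-linearized complex to a Morse complex computing $H^{*}(L,\Lambda;\F_2)$; the chain-map identity comes from the two types of codimension-one breaking (Morse breaking at the marked point versus SFT breaking at the positive end, with the extra negative punctures capped by rigid discs weighted by $\varepsilon_L$); acyclicity of $\OPN{Cone}(\Phi)$ is Ekholm's Floer-theoretic/displacement argument; and Lefschetz duality $H^{n+1-i}(L,\Lambda;\F_2)\cong H_i(L;\F_2)$ (which over $\F_2$ needs no orientation, though Maslov number $0$ forces $L$ orientable anyway) converts this into the stated isomorphism. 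The degree bookkeeping is consistent with the statement. The caveat is that, as written, everything that makes the theorem nontrivial is asserted rather than established: transversality for the marked-point moduli spaces, the compactness and gluing identifying the boundary of the one-dimensional moduli spaces, and the precise identification of $\OPN{Cone}(\Phi)$ with an acyclic Floer complex of $L$ and its push-off (where one must be careful about which generators appear at the cylindrical end). These are exactly the contributions of \cite{DimitroglouRizellLiftingPSH} (and, with signs and $A_\infty$-structure, \cite{EkholmLekiliduality}), so your text is an accurate proof strategy rather than a self-contained proof; for the purposes of this paper, citing the theorem as the author does is the appropriate course.
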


\begin{remark}
\label{extSeiiso}
Note that Seidel's isomorphism has been proven by  Dimitroglou Rizell in \cite{DimitroglouRizellLiftingPSH} only with $\F_2$-coefficients. On the other hand, signs of the cobordism maps have been worked out by  Karlsson in \cite{KarlssonOrcob}. Besides that the stronger result of Ekholm--Lekili \cite{EkholmLekiliduality} which compares the corresponding $A_\infty$-structures has been proven with signs, in particular it works over an arbitrary field. From this perspective we can say that Seidel's isomorphism holds with $\Q$-coefficients and with $\Z$-coefficients.
In addition, observe that there is a version of Seidel's isomorphism with local coefficients recently proven by Gao--Rutherford, see \cite{NFAugoTK}.
\end{remark}

\subsection{Obstruction of fillability from augmentation varieties}
\label{Ofaugvarsec}
In \cite{NFAugoTK} Gao and Rutherford use local structure of the augmentation variety to obstruct Lagrangian
fillings, and to provide the examples of Legendrian twist knots with augmentations for which the other known obstructions to the existence of an exact Lagrangian filling inducing it such as the one coming from  Thurston-Bennequin number \cite{ChantraineLagrConc}, Seidel's isomorphism \cite{DimitroglouRizellLiftingPSH, EkholmRatSFT}, the extension of Seidel's isomorphism of Ekholm-Lekili \cite{EkholmLekiliduality} and the examples of Etg\"{u} \cite{nonfilLegkninthe3sph} based on the result of Ekholm-Lekili do not apply.

Now we recall some details of the obstruction of Gao and Rutherford from \cite{NFAugoTK}.
Let $L$ be an exact Lagrangian filling of a Legendrian submanifold $\Lambda\subset \R^{2n+1}_{st}$, then
$$H_1(L;\Z)\simeq \Z^k\oplus \Z/k_1 \oplus \dots \oplus \Z/k_s$$
for some $k,k_1,\dots,k_s$. We define $$Aug(L;\mathbb F)\simeq (\mathbb F^{\ast})^k\oplus C_{k_1}(\mathbb F) \oplus \dots \oplus C_{k_r}(\mathbb F),$$
where $C_{k_i}(\mathbb F)$ is the group of $k_i$-th roots of unity in $\mathbb F$, i.e.  $$C_{k_i}(\mathbb F)=\{ x\ |\ x^{k_i}=1, x\in \mathbb F\}$$ and $i=1,\dots,k_s$.
\begin{proposition}[Proposition 2.6 in \cite{NFAugoTK}]
\label{nmamforb}
Let  $L$ be an exact Lagrangian filling of a Legendrian submanifold $\Lambda\subset (\R^{2n+1}, \alpha_{st})$  such that the Maslov number of $L$ vanishes. If $\mathbb F$ has a characteristic different from $2$, assume that $L$ is equipped with a choice of spin structure.
Then, the map
$f^{\ast}_L : Aug (L, \mathbb F) \to Aug (\Lambda, \mathbb F)$
is an injective, algebraic map.
\end{proposition}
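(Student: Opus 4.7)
The plan is to construct $f^{\ast}_L$ explicitly as a $\rho$-twisted count of pseudoholomorphic discs on $L$, read off algebraicity from the resulting formula, and then handle the main content, injectivity, by invoking the local-coefficient version of Seidel's isomorphism recalled in Remark \ref{extSeiiso}.

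For the construction, identify $Aug(L;\F)$ with $\operatorname{Hom}(H_1(L;\Z),\F^{\ast})$ and, given such $\rho$, define $\varepsilon_\rho = f^{\ast}_L(\rho)$ on a Reeb chord generator $a$ of $\A(\Lambda)$ by
\[
\varepsilon_\rho(a) \;=\; \sum_u \sigma(u)\,\rho([\partial u]),
\]
where $u$ ranges over rigid $J$-holomorphic discs in the symplectization with boundary on $L$ and a single positive puncture asymptotic to $a$, the class $[\partial u]\in H_1(L;\Z)$ is obtained via a fixed system of capping paths, and $\sigma(u)\in\{\pm 1\}$ is the coherent orientation sign (trivial in characteristic $2$). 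Extending multiplicatively, the identity $\varepsilon_\rho\circ\partial = 0$ follows from the standard SFT boundary analysis together with the additivity $[\partial(u_1 \# u_2)] = [\partial u_1] + [\partial u_2]$, which ensures that $\rho$ converts sums of broken configurations into the Leibniz-type products produced by the differential.

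Algebraicity is then immediate. Fixing a decomposition $H_1(L;\Z) \cong \Z^k \oplus \Z/k_1 \oplus \cdots \oplus \Z/k_s$ realizes $Aug(L;\F)$ as $\operatorname{Spec}\F[t_1^{\pm 1},\ldots,t_k^{\pm 1},u_1,\ldots,u_s]/(u_j^{k_j}-1)$, and each $\rho([\partial u])$ is a Laurent monomial in these coordinates. Hence $\varepsilon_\rho(a)$ is a Laurent polynomial in $(t,u)$, exhibiting $f^{\ast}_L$ as a morphism of affine $\F$-schemes.

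The main obstacle is injectivity. Assume $\varepsilon_{\rho_1} = \varepsilon_{\rho_2}$. My plan is to apply the local-coefficient Seidel isomorphism from Remark \ref{extSeiiso} to identify the $\varepsilon_{\rho_i}$-linearised cohomology of $\Lambda$ with $H^{n-\ast}(L;\rho_i)$; since the left-hand sides coincide, so do the twisted cohomologies. Upgrading to the $A_\infty$ level via Ekholm--Lekili would promote this from an equality of vector spaces to an identification of the two rank-one local systems as modules in the appropriate Fukaya-type category on $L$. Because rank-one local systems have no nontrivial automorphisms, isomorphism classes, quasi-isomorphism classes, and characters of $H_1(L;\Z)$ all coincide, so this forces $\rho_1=\rho_2$. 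The delicate point is to make the Seidel/Ekholm--Lekili correspondence suitably natural in $\rho$ so that the reduction is rigorous at the level of augmentations, rather than only up to $A_\infty$-equivalence.
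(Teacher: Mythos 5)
First, a point of reference: the paper does not prove this proposition at all --- it is quoted verbatim as Proposition~5.6 of Gao--Rutherford \cite{NFAugoTK}, so there is no in-paper proof to compare against; I will compare your proposal with the argument in that source. Your construction of $f^{\ast}_L$ as a $\rho$-twisted, sign-weighted count of rigid discs with boundary on $L$ and one positive puncture, and your deduction of algebraicity from the fact that each $\varepsilon_\rho(a)$ is a Laurent polynomial in coordinates on $\operatorname{Hom}(H_1(L;\Z),\F^{\ast})$, are both correct and are essentially how the map is defined and shown to be algebraic in \cite{NFAugoTK}.

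The injectivity step, however, has a genuine gap, and you have put your finger on it yourself. Applying the local-coefficient Seidel isomorphism separately to $\varepsilon_{\rho_1}$ and $\varepsilon_{\rho_2}$ only tells you that $H^{\ast}(L;\rho_1)$ and $H^{\ast}(L;\rho_2)$ are isomorphic as graded vector spaces, which is far too weak: distinct rank-one local systems on $L$ routinely have identical twisted Betti numbers, so no amount of comparing the two mono-linearised cohomologies can recover $\rho_1=\rho_2$. The ``upgrade to the $A_\infty$ level'' you invoke to identify the local systems themselves is precisely the missing content, and as stated it is a hope rather than an argument. The mechanism that actually works --- and the one underlying \cite{NFAugoTK} (cf.\ Remark~\ref{extSeiiso}, which attributes the local-coefficient Seidel isomorphism to that paper) --- is the \emph{bilinearised} version: for a pair of local systems one has an isomorphism of the form
\begin{equation*}
LCH^{\,n-i}_{(\varepsilon_{\rho_1},\varepsilon_{\rho_2})}(\Lambda)\;\simeq\;H_i\bigl(L;\operatorname{Hom}(\rho_1,\rho_2)\bigr).
\end{equation*}
If $\varepsilon_{\rho_1}=\varepsilon_{\rho_2}$, the left-hand side coincides with the mono-linearised group, which for $i=0$ equals $H_0(L;\F)\simeq\F$ since $L$ is connected; but $H_0\bigl(L;\operatorname{Hom}(\rho_1,\rho_2)\bigr)$ is the module of coinvariants $\F/(\rho_1(g)\rho_2(g)^{-1}-1)$, which is nonzero only when $\rho_1\otimes\rho_2^{-1}$ is the trivial character. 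Hence $\rho_1=\rho_2$. In short: replace the comparison of two separately linearised theories by a single computation with coefficients in $\operatorname{Hom}(\rho_1,\rho_2)$ in degree matching $H_0(L)$; without that, the proof does not close.
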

\begin{remark}
Note that when $\Lambda$ is a Legendrian knot in $(\R^3, \alpha_{st})$ and $L$ is an orientable exact Lagrangian filling of $\Lambda$, then $H_1(L;\Z)$ is a free abelian group. Hence $Aug (L, \mathbb F)\simeq (\mathbb F^{\ast})^k$ for some $k$.
\end{remark}

\subsection{Main result}

Our main result is the extension of the examples in $\R^3_{st}$ of augmentations whose geometricity is obstructed by Seidel's isomorphism (see Section \ref{classAlowdim}) to high dimensions (see Section \ref{HDAOCA}) and the extension of the examples in $\R^3_{st}$ of  augmentations whose geometricity is not obstructed by Seidel's isomorphism, but  is obstructed by Proposition \ref{nmamforb}  (see Section \ref{secclassBGR}) to high dimensions (see Section \ref{NonSeiClBHD}). 

The examples we get will be obtained using the spherical spinning construction.

\subsubsection*{Spherical spinning}

Recall that the front  $S^k$-spinning construction is a Legendrian version of suspension.  The front  $S^k$-spinning construction from a Legendrian submanifold $\Lambda\subset \R^{2n+1}_{st}$ produces a Legendrian embedding of $\Lambda \times S^k$ inside $\R^{2(n+k)+1}_{st}$ whose image is denoted by $\Sigma_{S^k} \Lambda$. When $k=1$ it has appeared in the work of Ekholm--Etnyre--Sullivan \cite{EkholmEtnyreSullivannoniso} and in the case when $k>1$ it has been discussed by the author in \cite{GolovkoSphericalSpinning}. One of the important properties of the spherical front spinning that we will need is that as shown in \cite{GolovkoSphericalSpinning} it can be extended to exact Lagrangian cobordisms.  For other details of the construction and its properties we refer the reader to
\cite{DRGEstimating, EkholmEtnyreSullivannoniso,  GolovkoSphericalSpinning, GolovkoInf22, GolovkoTopDistInf22}.

Both types of examples (from Sections  \ref{HDAOCA},   \ref{NonSeiClBHD}) are joined in the following theorem.
\begin{theorem}
\label{topresnongaug}
There is a Legendrian submanifold $\Lambda$ in $\mathbb R^{2n+1}_{st}$ of Maslov number $0$ such that the Chekanov-Eliashberg algebra of $\Lambda$ admits an augmentation
$\varepsilon: \A(\Lambda)\to (\F_2, 0)$ which is not induced by a spin exact Lagrangian filling of Maslov number $0$.
\end{theorem}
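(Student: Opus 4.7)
My plan is to promote the two families of non-geometric augmentations already available on Legendrian knots $\Lambda_0 \subset \R^3_{st}$ to high dimensions by means of the spherical spinning construction $\Sigma_{S^k}$. I would take $\Lambda_0$ from Section \ref{classAlowdim} (so that Seidel's isomorphism already obstructs geometricity in dimension $3$) together with its non-geometric augmentation $\varepsilon_0 : \A(\Lambda_0) \to (\F_2, 0)$, and separately from Section \ref{secclassBGR} (so that Proposition \ref{nmamforb} obstructs geometricity via the augmentation variety) together with the corresponding $\varepsilon_0$. Applying $\Sigma_{S^k}$ for $k \geq 1$ yields a Legendrian $\Lambda := \Sigma_{S^k}\Lambda_0$ inside $\R^{2(1+k)+1}_{st}$ which is diffeomorphic to $\Lambda_0 \times S^k$ and of Maslov number $0$; I would choose the natural spin structure on $\Lambda$ inherited from $\Lambda_0$ and $S^k$, which is compatible with any putative spin exact Lagrangian filling.

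The next step is to construct the candidate augmentation $\varepsilon: \A(\Lambda) \to (\F_2, 0)$ from $\varepsilon_0$. Since the spherical spinning extends to exact Lagrangian cobordisms by \cite{GolovkoSphericalSpinning}, there is a natural relationship between the Chekanov--Eliashberg DGAs of $\Lambda_0$ and $\Lambda$: the Reeb chords of $\Sigma_{S^k}\Lambda_0$ organise into families parametrised by those of $\Lambda_0$, and rigid holomorphic discs on the spun Legendrian are controlled by those on $\Lambda_0$. I would make this correspondence precise at the level of DGA maps and define $\varepsilon$ by pulling $\varepsilon_0$ back along it, then verify directly that $\varepsilon \circ \partial = 0$ and that $\varepsilon$ vanishes on generators of nonzero degree.

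The final step is to rule out any spin exact Lagrangian filling $L$ of $\Lambda$ inducing $\varepsilon$. For the Class A examples I would appeal to Theorem \ref{SeidelIso} to identify $LCH^{n-i}_{\varepsilon}(\Lambda;\F_2)$ with $H_i(L;\F_2)$. A K\"unneth-type computation under spinning expresses the left-hand side in terms of $LCH^\ast_{\varepsilon_0}(\Lambda_0;\F_2)$ and $H^\ast(S^k;\F_2)$, and the resulting Poincar\'e polynomial is incompatible with that of any smooth filling of $\Lambda_0 \times S^k$, exactly as in the low-dimensional case. For the Class B examples, where Seidel's criterion is inconclusive, I would instead compute $Aug(L,\F)$ and $Aug(\Lambda,\F)$ using the description of $H_1$ before and after spinning, and apply Proposition \ref{nmamforb} to exhibit two distinct elements of $Aug(L,\F)$ mapping to the same augmentation, contradicting injectivity of $f_L^\ast$.

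The hardest part of this plan is the middle step: establishing a sufficiently clean correspondence between the DGAs of $\Lambda_0$ and $\Sigma_{S^k}\Lambda_0$ that both supports the lift of augmentations and allows an explicit computation of the relevant invariants on $\Lambda$ in terms of low-dimensional data. In particular, the K\"unneth-type formula for linearised Legendrian contact cohomology under spherical spinning, with correct grading shifts and compatible spin data, is the key technical input; once this bookkeeping is in place, the two obstructions are simply transported from the known $\R^3_{st}$ examples to their high-dimensional spun counterparts and the theorem follows by taking $\Lambda$ to be either spun family.
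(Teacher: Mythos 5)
Your proposal follows essentially the same route as the paper: spin the Class A and Class B knots, lift the augmentation through the DGA inclusion/projection $\pi^{\ast}$ coming from the spinning construction, and then obstruct fillability via Seidel's isomorphism combined with a K\"unneth-type formula for linearised contact cohomology under spinning (Class A), respectively via the computation of $H_1$ of a putative filling over $\Z$ and the injectivity of $f_L^{\ast}$ into the augmentation variety (Class B). The technical steps you flag as the hard part (the grading-shifted correspondence of Reeb chords and the K\"unneth formula) are exactly the inputs the paper supplies via \cite{DRGEstimating}, \cite{GolovkoRizellsphsp} and Proposition \ref{kunnethjuataug}.
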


In addition, along the way we discuss the relation between augmentation varieties of Legendrian submanifolds and their spherical spuns (Proposition \ref{augeqaug0}).

\section{Examples in low dimensions}
\begin{figure}[!ht]
\includegraphics[scale=.9]{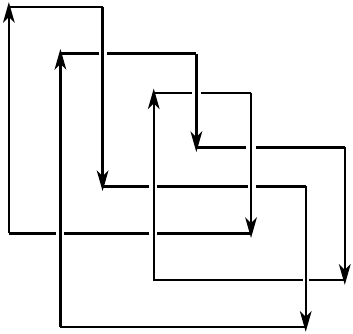}
\vspace{0.1in}
\caption{The grid diagram of the Legendrian representative of $m(8_{21})$ from \cite{ChNgAtlas}.}
\label{fig:Seim821}
\end{figure}
\subsection{Class A}
\label{classAlowdim}
In this class we consider Legendrian knots $\Lambda$ in $\R^3_{st}$ such that the Chekanov--Eliashberg algebra %over $\F_2$
admits an augmentation $$\varepsilon: \A(\Lambda)\to (\F_2,0)$$ satisfying that for some $i>1$ or $i<0$, $LCH_{\varepsilon}^i(\Lambda;\F_2)$ is not isomorphic to $H_{1-i}(L_{\Lambda};\F_2)$ for all exact Lagrangian fillings  $L_{\Lambda}$ of Maslov number $0$ .
In other words, in this class we consider Legendrian knots with augmentations that are not geometric because they violate the obstruction coming from Seidel's isomorphism.
\begin{remark}
From Remark \ref{extSeiiso} it follows that  in the description of Class A we can rely on Seidel's isomorphism not only with $\F_2$-coefficients, but with more general field coefficients such as  $\mathbb Q$ and $\mathbb R$, and also with $\Z$-coefficients.
\end{remark}
We rely on the work of Chongchitmate--Ng  \cite{ChNgAtlas}:
\begin{itemize}
\item[(i)] Legendrian representative of $m(8_{21})$ from \cite{ChNgAtlas}, see Figure \ref{fig:Seim821}, has a vanishing rotation number, and hence Maslov number $0$, and two Poincar\'{e} polynomials, one of which is of the form $$P_{m(8_{21})}(t)=t^{-1} + 4 + 2t,$$
The augmentation $\varepsilon_{m(8_{21})}$ which corresponds to this polynomial has the property that
$$LCH^{\varepsilon_{m(8_{21})}}_{-1}(\Lambda;\F_2)\simeq LCH_{\varepsilon_{m(8_{21})}}^{-1}(\Lambda;\F_2)\simeq \F_2.$$
From Theorem \ref{SeidelIso} it follows that $\varepsilon_{m(8_{21})}$ is not induced by a Maslov number $0$ exact Lagrangian filling $L$, since otherwise $H_{2}(L;\mathbb F_2)\simeq \F_2$, which is impossible from the topological point of view.
\item[(ii)] We can argue the same way for many other Legendrian knots $\Lambda$  (in particular from the atlas in \cite{ChNgAtlas}), whose Poincar\'{e} polynomials are computed with respect to certain augmentations that are by topological reasons in conflict with Seidel's isomorphism. For example, one can take Legendrian representatives of $9_{46}$, $9_{48}$ and so on from \cite{ChNgAtlas}.
\end{itemize}

\subsection{Class B}
\label{secclassBGR}
Consider the following Legendrian representatives of twist knots $\Lambda_n$, $n$ is odd and $n=2k+1>3$, investigated  by Rutherford and Gao in \cite{NFAugoTK}.
The set of Reeb chords $\mQ(\Lambda_n)$ consists of chords $a,b$, $c_1,\dots, c_n$ and $e_0, e_1, \dots e_n$, see Figure \ref{fig:family}.
Reeb chords $a$, $b$ and $c_1, \ldots, c_k$ appear on the right, and crossings $c_{k+1}, \ldots, c_{n}$ appear on the left.  The right cusps are labeled in clockwise order starting at the upper right as $e_0, e_1, \ldots, e_{k+1}$ (appearing on the right) and $e_{k+2}, \ldots, e_n$ (appearing on the left).

\begin{figure}[!ht]
\labellist
\small
\pinlabel $c_1$ [l] at 195 153
\pinlabel $c_2$ [l] at 195 115
\pinlabel $c_3$ [l] at 195 75
\pinlabel $c_4$ [l] at 40 85
\pinlabel $c_5$ [l] at 40 123
\pinlabel $c_6$ [l] at 40 161
\pinlabel $c_7$ [l] at 48 197
\pinlabel $a$ [b] at 156 193
\pinlabel $b$ [b] at 194 201
\pinlabel $e_0$ [l] at 220 223
\pinlabel $e_1$ [l] at 220 173
\pinlabel $e_2$ [l] at 220 137
\pinlabel $e_3$ [l] at 220 99
\pinlabel $e_4$ [l] at 220 53
\pinlabel $e_5$ [l] at 68 103
\pinlabel $e_6$ [l] at 68 143
\pinlabel $e_7$ [l] at 68 178
\endlabellist
\includegraphics[scale=.7]{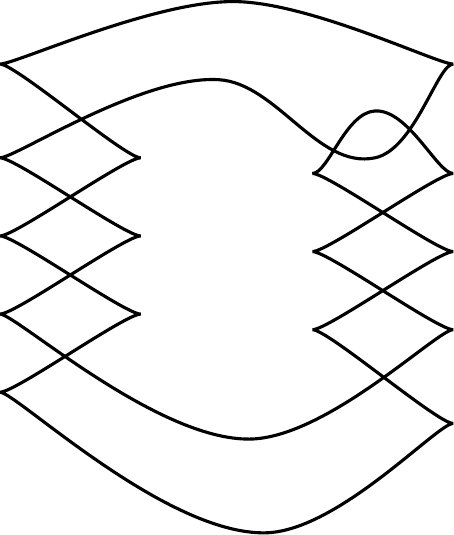}
\vspace{0.1in}
\caption{The front projection of $\Lambda_7$.}
\label{fig:family}
\end{figure}

As proven in \cite{NFAugoTK}, there is an augmentation $\varepsilon_{\Lambda_n}$ to $\F_2$ defined by
$$\varepsilon_{\Lambda_n}(a)=0, \varepsilon_{\Lambda_n}(b)=0, \varepsilon_{\Lambda_n}(c_i)=1, \varepsilon_{\Lambda_n}(e_j)=0$$
for all $i=1,\dots,n$ and $j=0,\dots,n$, that is not induced by any Maslov $0$ exact Lagrangian filling.

\section{High dimensional analogue of Class A}
\label{HDAOCA}
We now consider the collection of inductive spuns of the Legendrian representative $\Lambda$ of $m(8_{21})$ that we discussed in Section \ref{classAlowdim}.
Following the discussion in \cite{DRGEstimating} observe that there is an inclusion of  DGAs $i:\mathcal A(\Lambda) \hookrightarrow \mathcal A(\Sigma_{S^1}\Lambda)$, which can be left inverted by a surjective DGA map $\pi:\mathcal A(\Sigma_{S^1}\Lambda)\to \mathcal A(\Lambda)$.

\begin{theorem}
\label{proponespinfoeoneex}
Given $\Sigma_{S^1}\dots \Sigma_{S^1} \Lambda$ and an augmentation $\varepsilon_{\Sigma_{S^1}\dots \Sigma_{S^1} \Lambda}$ of $\A_{\F_2}(\Sigma_{S^1}\dots \Sigma_{S^1} \Lambda)$ which is given by the inductive application of $\pi^{\ast}$ to $\varepsilon_{\Lambda}$. Then $\varepsilon_{\Sigma_{S^1}\dots \Sigma_{S^1}  \Lambda}$ is not induced by a Maslov number $0$  exact Lagrangian filling.
\end{theorem}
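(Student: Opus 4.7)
The plan is to combine Seidel's isomorphism (Theorem \ref{SeidelIso}) with a K\"unneth-type description of the linearised cohomology under the retraction $\pi$ to exhibit a nonzero class in top cohomological degree that no filling could realise for topological reasons.

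First I would invoke the K\"unneth formula for the spherical spinning construction from \cite{DRGEstimating, GolovkoSphericalSpinning}: for any augmentation $\varepsilon$ of $\A_{\F_2}(\Lambda)$ and its pull-back $\pi^{*}\varepsilon$ to $\A_{\F_2}(\Sigma_{S^1}\Lambda)$,
$$LCH^{*}_{\pi^{*}\varepsilon}(\Sigma_{S^1}\Lambda;\F_2) \cong LCH^{*}_{\varepsilon}(\Lambda;\F_2) \otimes_{\F_2} H^{*}(S^1;\F_2).$$
Applying this inductively $k$ times yields
$$LCH^{*}_{\varepsilon_{\Sigma_{S^1}^{k}\Lambda}}(\Sigma_{S^1}^{k}\Lambda;\F_2) \cong LCH^{*}_{\varepsilon_{\Lambda}}(\Lambda;\F_2) \otimes_{\F_2} H^{*}((S^1)^{k};\F_2).$$
Since $LCH^{-1}_{\varepsilon_{\Lambda}}(\Lambda;\F_2)\cong \F_2$ for the augmentation $\varepsilon_{m(8_{21})}$ of $\Lambda=m(8_{21})$ from Section \ref{classAlowdim}, and $H^{0}((S^1)^{k};\F_2)\cong \F_2$, the $(-1)$-graded piece is
$$LCH^{-1}_{\varepsilon_{\Sigma_{S^1}^{k}\Lambda}}(\Sigma_{S^1}^{k}\Lambda;\F_2) \cong \F_2.$$

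Next I would argue by contradiction: suppose $L$ is a Maslov number $0$ spin exact Lagrangian filling of $\Sigma_{S^1}^{k}\Lambda$ inducing the augmentation $\varepsilon_{\Sigma_{S^1}^{k}\Lambda}=(\pi^{*})^{k}\varepsilon_{\Lambda}$. Since $\Lambda$ is $1$-dimensional, $\Sigma_{S^1}^{k}\Lambda$ has dimension $k+1$, so $L$ is a compact $(k+2)$-manifold with nonempty boundary. Theorem \ref{SeidelIso} then forces
$$H_{k+2}(L;\F_2) \cong LCH^{-1}_{\varepsilon_{\Sigma_{S^1}^{k}\Lambda}}(\Sigma_{S^1}^{k}\Lambda;\F_2) \cong \F_2,$$
which is impossible, since the top $\F_2$-homology of a compact manifold with nonempty boundary vanishes.

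The main obstacle is justifying the K\"unneth splitting of the linearised cohomology with respect to $\pi^{*}\varepsilon$. The DGA pair $(i,\pi)$ is already recalled in the excerpt, and the generating set of $\A(\Sigma_{S^1}\Lambda)$ should decompose into two graded copies of $\mQ(\Lambda)$ with a degree shift by $1$ reflecting $H^{*}(S^1;\F_2)$; the content to check is that, for the augmentation $\pi^{*}\varepsilon$, the linearised differential respects this decomposition and yields the tensor product above. Once that step is in hand, the iteration and the final Seidel-type obstruction are essentially formal, and the whole argument goes through uniformly in the number of spinnings applied.
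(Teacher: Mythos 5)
Your proposal is correct and follows essentially the same route as the paper: the same K\"unneth-type splitting of the linearised cohomology for the pull-back augmentation $\pi^{*}\varepsilon$ (the paper cites the product/twist-spun K\"unneth theorem for this), the same identification of a nonzero class in degree $-1$, and the same contradiction with Seidel's isomorphism via the vanishing of the top $\F_2$-homology of a compact filling with boundary. The only cosmetic difference is that you package the iteration as a single $k$-fold tensor with $H^{*}((S^1)^{k};\F_2)$, whereas the paper spins once and then repeats the argument inductively.
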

\begin{proof}
First we consider the case of $S^1$-spun of $\Lambda$. From the K\"{u}nneth-type formula described in \cite[Theorem 4.1, Remark 4.2]{GolovkoRizellsphsp}, it follows that $$LCH^{\varepsilon_{\Sigma_{S^1} \Lambda}}_{i} (\Sigma_{S^1}\Lambda; \F_2) \simeq LCH^{\varepsilon_{\Lambda}}_{i} (\Lambda; \F_2)\oplus  LCH^{\varepsilon_{\Lambda}}_{i-1}(\Lambda; \F_2).$$
Then using it and the fact that $LCH^{\varepsilon_{\Lambda}}_{-1}(\Lambda; \F_2)\simeq \F_2$, $LCH^{\varepsilon_{\Lambda}}_{-2}(\Lambda; \F_2)\simeq 0$, we obtain
\begin{align}
\label{ineq:1}
\dim (LCH^{\varepsilon_{\Sigma_{S^1} \Lambda}}_{-1} (\Sigma_{S^1}\Lambda; \F_2))= 1.
\end{align}
Assume that $\varepsilon_{\Sigma_{S^1} \Lambda}$ is induced by the Maslov number $0$ spin exact Lagrangian cobordism $L$. Then from Theorem \ref{SeidelIso} we see that $$LCH_{2-i}^{\varepsilon_{\Sigma_{S^1} \Lambda}}(\Sigma_{S^1} \Lambda;\F_2)\simeq LCH^{2-i}_{\varepsilon_{\Sigma_{S^1} \Lambda}}(\Sigma_{S^1} \Lambda;\F_2)\simeq H_i(L; \F_2),$$
and hence from Equation \ref{ineq:1} it follows that $\dim H_3(L; \F_2)= 1$, which is impossible since $L$ is a spin $3$-dimensional filling of $\Sigma_{S^1}\Lambda$.

Then we inductively apply the same strategy and get that $\Sigma_{S^1}\dots \Sigma_{S^1} \Lambda$ admits an augmentation, $\varepsilon_{\Sigma_{S^1}\dots \Sigma_{S^1} \Lambda}$, which is given by the inductive application of  $\pi^{\ast}$ on $\varepsilon_{\Lambda}$, and which is not induced by any spin, Maslov number $0$ exact Lagrangian cobordism.
\end{proof}

\begin{remark}
Other Legendrian knots from Class A can be treated in the absolutely analogous way, i.e. the result of Theorem \ref{proponespinfoeoneex} will work for other Legendrian knots from Class A.
\end{remark}

\begin{remark}
Combining the geography/realization result for Poincar\'{e} polynomials by Bourgeois--Galant \cite{BourgeoisGalant19} with Seidel's isomorphism, one can find more non-geometric augmentations in the high dimensional analogue of Class A.   
\end{remark}

%\begin{remark}
%Note that the condition of being spin in Theorem \ref{proponespinfoeoneex} can be omitted when in the proof one uses $\F_2$ coefficients, but is necessary for a general field coefficients and for coefficients in $\Z$.
%\end{remark}

\section{High dimensional analogue of Class B}
\label{NonSeiClBHD}

In this section we consider  the  collection of inductive spuns of the Legendrian twist knots $\Lambda_n$ from Section \ref{secclassBGR}. Again, following the discussion in \cite{DRGEstimating} observe that there is an inclusion of  DGAs $i:\mathcal A(\Lambda_n) \hookrightarrow \mathcal A(\Sigma_{S^l}\Lambda_n)$, which can be left inverted by a surjective DGA map $\pi:\mathcal A(\Sigma_{S^l}\Lambda_n)\to \mathcal A(\Lambda_n)$ for $l\geq 1$.
\begin{theorem}
\label{proponespinfortwist}
For a Legendrian representative $\Lambda_n$, $\Sigma_{S^{1}}\dots \Sigma_{S^{1}} \Lambda_n$ admits a graded augmentation $\varepsilon_{\Sigma_{S^{1}}\dots \Sigma_{S^{1}} \Lambda_n}$ of $\A_{\F_2}(\Sigma_{S^{1}}\dots \Sigma_{S^{1}} \Lambda_n)$ which  is defined by the inductive application of $\pi^{\ast}$ to $\varepsilon_{\Lambda_n}$ and which is not induced by a Maslov number $0$ spin exact Lagrangian cobordism.
\end{theorem}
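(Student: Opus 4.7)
The plan is to choose the spinning dimensions $l_1,\ldots,l_s$ so that spherical spinning does not enlarge the set of graded augmentations, and then to transfer the Gao--Rutherford local obstruction at $\varepsilon_{\Lambda_n}$ to the pulled-back augmentation on the spun algebra. First I would analyze the Reeb chords of $\Sigma_{S^l}\Lambda_n$ via the Morse model for $S^l$ used in \cite{EkholmEtnyreSullivannoniso, GolovkoSphericalSpinning}: each Reeb chord $c$ of $\Lambda_n$ produces two families of Reeb chords in $\Sigma_{S^l}\Lambda_n$, of degrees $|c|$ and $|c|+l$, the ``old'' family corresponding to the minimum and the ``new'' family to the maximum of a perfect Morse function on $S^l$. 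If we choose each $l_i\geq 1$ and generic enough to avoid the finitely many bad values $\{-|c|:c\in\mQ(\Lambda_n),\ |c|<0\}$, then the only generators of $\A(\Sigma_{S^l}\Lambda_n)$ of degree zero are precisely those pulled back from the degree zero Reeb chords of $\Lambda_n$ via $\pi$. Since a graded augmentation vanishes on all generators of nonzero degree, this forces every graded augmentation of $\A(\Sigma_{S^l}\Lambda_n)$ to be of the form $\pi^\ast\varepsilon'$ for a graded augmentation $\varepsilon'$ of $\A(\Lambda_n)$; combined with $\pi\circ i=\mathrm{id}$, this promotes $\pi^\ast$ to an isomorphism of augmentation varieties $Aug(\Lambda_n,\F)\xrightarrow{\sim} Aug(\Sigma_{S^l}\Lambda_n,\F)$.

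Iterating this for the $s$ successive spinnings yields an isomorphism between $Aug(\Lambda_n,\F)$ and $Aug(\Sigma_{S^{l_1}}\dots\Sigma_{S^{l_s}}\Lambda_n,\F)$ sending $\varepsilon_{\Lambda_n}$ to the pullback augmentation $\varepsilon := (\pi_1\circ\cdots\circ\pi_s)^\ast\varepsilon_{\Lambda_n}$. I would then assume for contradiction that $\varepsilon$ is induced by a spin, Maslov $0$ exact Lagrangian filling $L$. By Proposition \ref{nmamforb}, the map $f_L^\ast:Aug(L,\F)\to Aug(\Sigma_{S^{l_1}}\dots\Sigma_{S^{l_s}}\Lambda_n,\F)$ is an injective algebraic map whose image contains $\varepsilon$. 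Composing with the inverse of the iterated isomorphism produces an injective algebraic map $Aug(L,\F)\to Aug(\Lambda_n,\F)$ whose image contains $\varepsilon_{\Lambda_n}$ and which, as an algebraic group, is isomorphic to $(\F^\ast)^k\oplus C_{k_1}(\F)\oplus\cdots\oplus C_{k_r}(\F)$ for the invariants determined by $H_1(L;\Z)$. This is precisely the local-algebraic configuration at $\varepsilon_{\Lambda_n}$ that Gao--Rutherford \cite{NFAugoTK} rule out, giving the desired contradiction and hence non-geometricity of $\varepsilon$.

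The main obstacle is Step 1: controlling all Reeb chord gradings after spinning so that no unexpected degree-zero generators appear, and then verifying that $\pi^\ast$ induces a scheme-theoretic isomorphism of augmentation varieties (not merely a bijection on $\F$-points), since the Gao--Rutherford obstruction is genuinely local-algebraic in nature. A secondary point is that one is applying the Gao--Rutherford obstruction not to a literal filling of $\Lambda_n$ but to an abstract injective algebraic map from a group of the form $(\F^\ast)^k\oplus\text{torsion}$ into $Aug(\Lambda_n,\F)$; however, since their argument is local at $\varepsilon_{\Lambda_n}$ and depends only on the algebraic type of the source, it applies to any such map whose image passes through $\varepsilon_{\Lambda_n}$. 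One also needs to verify, routinely, that the iterated spinning of $\Lambda_n$ is again spin and has Maslov number $0$, which follows from the corresponding properties of $\Lambda_n$ and the general behavior of the spherical spinning construction.
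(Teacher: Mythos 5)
Your overall strategy --- identify $Aug(\Sigma_{S^{l_1}}\cdots\Sigma_{S^{l_s}}\Lambda_n;\F)$ with $Aug(\Lambda_n;\F)$ via $\pi^{\ast}$ and then transport the Gao--Rutherford obstruction --- is the same as the paper's, and your Step 1 essentially reproduces Proposition \ref{augeqaug0}. But there is a genuine gap in the second step. The Gao--Rutherford contradiction invoked here (their Proposition 4.3) rules out an injective algebraic map into $Aug(\Lambda_n;\bar{\F_2})$ through the point corresponding to $\varepsilon_{\Lambda_n}$ \emph{from the specific source $(\bar{\F_2}^{\ast})^2$}; it does not rule out such a map from an arbitrary group $(\F^{\ast})^k\oplus C_{k_1}(\F)\oplus\cdots\oplus C_{k_r}(\F)$. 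For instance, if $H_1(L;\Z)=0$ the source is a single point and an injection through $\varepsilon_{\Lambda_n}$ exists trivially, and the $k=1$ case is likewise not excluded by their local argument. For a Legendrian \emph{knot} the rank of $H_1$ of a filling is pinned down by the Thurston--Bennequin invariant, but for a filling $\tilde{L}$ of the high-dimensional spun Legendrian there is no a priori topological control on $H_1(\tilde{L};\Z)$. So your assertion that the obstruction ``depends only on the algebraic type of the source'' and hence ``applies to any such map'' has the logic reversed: one must first determine that algebraic type, and nothing in your argument does so.

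The paper closes exactly this gap with an ingredient your proposal omits: a K\"unneth-type computation of the linearized Legendrian contact cohomology of the spun Legendrian (Proposition \ref{kunnethjuataug}, valid when $l_1>M(\Lambda_n)-m(\Lambda_n)+1$, hence $l_1\geq 3$ here), the integral computation $LCH^0_{\varepsilon}(\Lambda_n;\Z)\simeq\Z^2$, and Seidel's isomorphism over $\Z$, which together force $H_1(\tilde{L};\Z)\simeq\Z^2$ and therefore $Aug(\tilde{L};\bar{\F_2})\simeq(\bar{\F_2}^{\ast})^2$ for any hypothetical Maslov number $0$ spin filling $\tilde{L}$. Only after this identification does Proposition \ref{nmamforb} combined with [Gao--Rutherford, Prop.\ 4.3] produce a contradiction. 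Relatedly, your criterion for choosing the $l_i$ (avoiding the degrees $-|c|$ for negative-degree chords, which is vacuous since $\Lambda_n$ has none) is too weak: at each stage $l_{i}$ must exceed the degree spread of the previously spun Legendrian so that the K\"unneth splitting, and with it the computation of $H_1$ of a putative filling, goes through.
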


\begin{proof}

First we consider the case of $S^1$-spun of $\Lambda_n$. We take the graded augmentation $\varepsilon_{\Sigma_{S^1} \Lambda_n}$ of $\mathcal A(\Sigma_{S^{1}}\Lambda_n)$ defined by $\varepsilon_{\Sigma_{S^1} \Lambda_n}:=\pi^{\ast}(\varepsilon_{\Lambda_n})$, for simplicity we will denote $\varepsilon_{\Lambda_n}$ by $\varepsilon$ and $\varepsilon_{\Sigma_{S^1} \Lambda_n}$ by $\tilde{\varepsilon}$.
Now we prove that  $\tilde{\varepsilon}$ is not induced by a Maslov $0$, spin
exact Lagrangian cobordism $\tilde{L}$ of $\Sigma_{S^{1}} \Lambda_n$.

From the proof of \cite[Proposition 4.1]{NFAugoTK} we know that
\begin{align}
\label{eq2wndd}
LCH^{0}_{\varepsilon}(\Lambda_n; \mathbb F)\simeq \F^2\ \mbox{and} \ LCH^{1}_{\varepsilon}(\Lambda_n; \mathbb F)\simeq \F
\end{align}
for a field $\F$. Now we claim that $LCH^{0}_{\varepsilon}(\Lambda_n; \mathbb Z)\simeq \Z^2$, $LCH^{1}_{\varepsilon}(\Lambda_n; \mathbb Z)\simeq \Z$.
One way to see that
is to perform a direct computation of $LCH^{0}_{\varepsilon}(\Lambda_n; \mathbb Z)$, which is similar to the discussion in \cite{NFAugoTK}. We would like to thank Dan Rutherford for discussing the following computation.

We consider the augmentation $\varepsilon$, with  $\varepsilon(a) = \varepsilon(b) =0$.
We will base our consideration on \cite[Formulas (4.1) -- (4.4)]{NFAugoTK}.

With $\varepsilon(a)=\varepsilon(b)=0$, the value of $\varepsilon$ on the other degree $0$ generators must be
\begin{align*}
\varepsilon(c_i) = \left \{
\begin{array}{ll}
1, & \mbox{for} \  i \ \mbox{is odd};\\
-1, & \mbox{for}\ i \ \mbox{is even}.\\
%0, & \mbox{otherwise}.
\end{array}
\right.
\end{align*}

Then we specialize $t = \varepsilon(t) =-1$, and then take the word length $1$ part of the conjugated differential
$\Phi_{\varepsilon}\circ \partial\circ \Phi_{\varepsilon}^{-1}$, where $\Phi_{\varepsilon}(c) =  c+\varepsilon(c)$
for any Reeb chord $c$.

After that we see that the linearized differential is defined by $\partial_{\varepsilon}(e_0)=c_n$,  $\partial_{\varepsilon}(e_1)=-c_1$ and for $i>1$
\begin{align*}
\partial_{\varepsilon}(e_i) = \left \{
\begin{array}{ll}
c_{i-1}-c_{i}, & \mbox{for} \  i \ \mbox{is odd};\\
-c_{i-1}+c_{i}, & \mbox{for}\ i \ \mbox{is even}.\\
%0, & \mbox{otherwise}.
\end{array}
\right.
\end{align*}
Then we see that there is an isomorphism of free $\Z$-modules $$\partial_{\varepsilon}|_{\Z\langle e_1, e_2, \ldots, e_n\rangle}:\Z\langle e_1, e_2, \ldots, e_n\rangle\to \Z\langle c_1, c_2, \ldots, c_n\rangle,$$ and hence we can see that there is a acyclic subcomplex $\mathcal C$
$$0 \to \Z\langle e_1, e_2, \ldots, e_n\rangle \to \Z\langle c_1, c_2, \ldots, c_n\rangle \to 0,$$
and we can take a quotient of $LCC^{\varepsilon}/\mathcal C$, where the quotient map $LCC^{\varepsilon} \to LCC^{\varepsilon}/\mathcal C$ is a quasi-isomorphism, which leads to the complex $0 \to \Z\langle e_0\rangle \to \Z\langle a,b\rangle \to 0$ with the vanishing differential which leads to $LCH^{\varepsilon}_0=\Z^2$ and
$LCH^{\varepsilon}_1=\Z$, which using the universal coefficient theorem implies that  $LCH_{\varepsilon}^0=\Z^2$ and
$LCH_{\varepsilon}^1=\Z$

The alternative way to get the same result is to observe that
from Isomorphisms \ref{eq2wndd} it follows that

$$LCH^{0}_{\varepsilon}(\Lambda_n; \mathbb C)\simeq \C^2\ \mbox{and}\ LCH^{1}_{\varepsilon}(\Lambda_n; \mathbb C)\simeq \C,$$ and therefore the rank of
$ LCH^{0}_{\varepsilon}(\Lambda_n; \mathbb Z)$ equals $2$ and the rank of
$ LCH^{1}_{\varepsilon}(\Lambda_n; \mathbb Z)$ equals $1$. In order to avoid $p$-torsion, one can use computations  with a field of characteristic $p$.
%}

From the K\"{u}nneth-type formula described in \cite[Theorem 4.1, Remark 4.2]{GolovkoRizellsphsp}, it follows that $$LCH_{\tilde{\varepsilon}}^{i} (\Sigma_{S^1}\Lambda; \Z) \simeq LCH_{\varepsilon}^{i} (\Lambda_n; \Z)\oplus  LCH_{\varepsilon}^{i-1}(\Lambda_n; \Z).$$ 

Hence $LCH_{\tilde{\varepsilon}}^{1}(\Sigma_{S^1}\Lambda_n)\simeq \Z^3$.
Then we use the Seidel's isomorphism over $\Z$ and see that $H_1(\tilde{L})\simeq \Z^3$, and hence from the discussion in Section \ref{Ofaugvarsec} we get that
\begin{align}
\label{augforspun}
Aug(\tilde{L},\F)\simeq(\F^{\ast})^3.
\end{align}

\begin{remark}
\label{stabh1}
Note that if we apply the argument described above to the $m$-th iterated spun of $\Lambda_n$, i.e. $\Sigma_{S^{1}}\dots \Sigma_{S^{1}} \Lambda_n$, then 
$H_1(\tilde{L})\simeq \Z^{2+m}$ and $Aug(\tilde{L},\F)\simeq(\F^{\ast})^{2+m}$.
\end{remark}

Then we need the following proposition:
\begin{proposition}
\label{augeqaug0}
Let $\Lambda$ be a Maslov number $0$ spin Legendrian submanifold such that $\mathcal Q_i(\Lambda)=\emptyset$ for all $i< 0$.
Then there is an isomorphism of (graded) augmentation varieties $Aug(\Lambda;\mathbb F)\simeq Aug(\Sigma_{S^m}\Lambda; \mathbb F)$ for all $m\geq 2$,
and  $Aug(\Sigma_{S^1}\Lambda; \mathbb F)\simeq Aug(\Lambda;\mathbb F)\times \mathbb F^{\ast} $
\end{proposition}
\begin{proof}
We will again rely on the analysis from \cite{DRGEstimating}.
Since $\mathcal A(\Lambda)$ is supported in non-negative degrees, we see that $\mathcal A(\Sigma_{S^m}\Lambda)$ is supported in non-negative degrees.
There is an inclusion of DGAs $i: \mathcal A(\Lambda)\to \mathcal A(\Sigma_{S^m}\Lambda)$ which admits a left-inverse  $\pi: \mathcal A(\Lambda)\to \mathcal A(\Sigma_{S^m}\Lambda)$ that has been proven with $\F_2$-coefficients, but admits a natural extension to group ring coefficients. 
We now assume that $m\geq 2$. Then $\Z[\pi_1(\Lambda)]\simeq \Z[\pi_1(S^m\times \Lambda)]\simeq \Z[\pi_1(\Sigma_{S^m}\Lambda)]$, and hence the coefficients of the corresponding DGAs are equivalent.
Since both $\mathcal A(\Lambda)$ and $\mathcal A(\Sigma_{S^m}\Lambda)$ are supported in non-negative degrees, the maps $i^{\ast}$, $\pi^{\ast}$ between graded augmentations of  $\mathcal A(\Lambda)$ and of  $\mathcal A(\Sigma_{S^m}\Lambda)$ induced by $i$ and $\pi$ provide a one-to-one correspondence, which leads to the isomorphism of (graded) augmentation varieties $Aug(\Lambda, \F)\simeq Aug(\Sigma_{S^m}\Lambda,\F)$ for $m\geq 2$.

Then we consider the case when $m=1$. From the existence of $i$ and its left inverse $\pi$ and the fact that $\mathcal Q_i(\Lambda)=\emptyset$ for $i<0$ it follows that for every graded augmentation $\varepsilon$ of $\Lambda$ and every generator $c$ of 
$\mathcal A(\Lambda)$ with grading $0$, $\pi^{\ast}(\varepsilon)(c_S)=\varepsilon(c)$. This, together with the fact  that $\pi_1(\Sigma_{S^1}\Lambda)\simeq \pi_1(\Lambda)\times \Z$ (and hence $\pi_1(\Sigma_{S^1}\Lambda)$ has an extra generator compared to $\pi_1(\Lambda)$), implies that there is an identification  
$Aug(\Sigma_{S^1}\Lambda; \mathbb F)\simeq Aug(\Lambda;\mathbb F)\times \mathbb F^{\ast}.$
\end{proof}

Now we assume that $\F=\F_2$, and $\varepsilon$, $\tilde{\varepsilon}$ are augmentations to $\F_2$. Note that in this case we can naturally extend $\tilde{\varepsilon}$ to be an augmentation to $\bar{\F_2}$ by applying the inclusion $\F_2\subset \bar{\F_2}$. 
Here $\bar{\F_2}$ denotes the algebraic closure of $\F_2$.
Now we recall that according to the computation in \cite[Proposition 4.1]{NFAugoTK},
$$Aug(\Lambda_n; \bar{\F_2})\simeq V = \{(a,b)\in \bar{\F_2}^2\ : \ ab\neq -1\},$$
and hence by Proposition \ref{augeqaug0} $Aug(\Sigma_{S^{1}}\Lambda_n,  \bar{\F_2})\simeq V\times \bar{\F_2}^{\ast}$.

From Proposition \ref{nmamforb} combined with Formula \ref{augforspun} and Proposition \ref{augeqaug0} we know that there is an injective algebraic map $$f^{\ast}_{\tilde{L}}:Aug(\tilde{L},\bar{\F_2})\simeq(\bar{\F_2}^{\ast})^3 \to Aug(\Sigma_{S^{1}}\Lambda_n; \bar{\F_2})\simeq V\times \bar{\F_2}^{\ast}.$$

Note that graded augmentation $\tilde{\varepsilon}$ determines a point 
\begin{align*}
(\tilde{\varepsilon}(a^S), \tilde{\varepsilon}(b^S), 
\dots)=(0,0,\dots)\ \mbox{in}\ Aug(\Sigma_{S^{1}}\Lambda_n; \bar{\F_2})\simeq V\times \bar{\F_2}^{\ast}.
\end{align*}
Then we prove the ``stabilized'' version of \cite[Proposition 4.3]{NFAugoTK} 
\begin{proposition}
\label{stabilizedobserv}
Let $k$ be an algebraically closed field, and $V = \{(a,b)\in k^2\ : \ ab\neq -1\}$. There
is no injective algebraic map
$\varphi :(k^{\ast})^2\times (k^{\ast})^m \to V\times (k^{\ast})^m$
having $(0,0,x)$ in its image, where $x\in (k^{\ast})^m$.
\end{proposition}

\begin{proof}
Here we follow the proof of  \cite[Proposition 4.3]{NFAugoTK}. 
Let $(s_1,s_2, t_1\dots, t_m)$ be the coordinates on $(k^{\ast})^{2}\times (k^{\ast})^{m}$, and let $(a, b, c, x_1,\dots,x_m)$ be the coordinates on 
$k^3\times (k^{\ast})^m$, where $V\times (k^{\ast})^m$ is a closed subvariety cut of by the equation $(ab+1)c =1$. Let $A=A(s_1,s_2, t_1\dots, t_m)$, $B=B(s_1,s_2, t_1\dots, t_m)$, $C=C(s_1,s_2, t_1\dots, t_m)$, $X_i=X_i(s_1,s_2, t_1\dots, t_m)$, $i=1,\dots, m$,  be the functions defined by the injective map $\varphi$. Then $A, B, C$, $X_1,\dots,X_m$ satisfy $$(AB+1)C =1.$$ 
Following the same argument as in the proof of  \cite[Proposition 4.3]{NFAugoTK} we can assume that
$$AB = \alpha s^{l_1}_1 s^{l_2}_2 t^{n_1}_1\dots t^{n_m}_m  - 1,$$
where $\alpha\in k^{\ast}$, $A$ and $B$ are polynomials in $k[s_1,s_2, t_1\dots, t_m]$,  $l_i,n_j\geq 0$, where $i=1,2$, $j=1,\dots,m$.
Now we use the fact that there exists certain $p=(s'_1, s'_2, t'_1,\dots, t'_m)$ such that $A(p)B(p) = 0$. 
Then, as in the proof of \cite[Proposition 4.3]{NFAugoTK}, there are two cases. 

The first case is when $l_1,l_2,n_1,\dots,n_m=0$. In this case we see that $\alpha=1$, since otherwise $AB$ would not have any zero.
Therefore, $AB = 0$, which implies that one of $A$ or $B$ is $0$. If $A=0$, then $$\varphi(s_1,s_2, t_1\dots, t_m)=(0,B(s_1,s_2, t_1\dots, t_m),\dots),$$ which contradicts injectivity of $\varphi$. We get to the same contradiction when $B=0$.

The second case concerns the situation when there is at least one non-zero number in $\{l_1,l_2,n_1,\dots,n_m\}$.  Now assume that $char(k)=0$ and, for example, $l_1\neq 0$. 
Then we observe that $$(AB)(s_1,s_2',t'_1,\dots,t'_m)=(\alpha (s'_2)^{l_2} (t'_1)^{n_1}\dots (t'_m)^{n_m}) s^{l_1}_1  - 1.$$
This leads to contradiction since $s'_1$ would be a multiple root of $(AB)(s_1,s_2',t'_1,\dots,t'_m)$ and $(\alpha (s'_2)^{l_2} (t'_1)^{n_1}\dots (t'_m)^{n_m}) s^{l_1}_1  - 1\in k[s_1]$ is separable, in other words $(AB)(s_1,s_2',t'_1,\dots,t'_m)$ has no multiple roots, since it is relatively prime to its formal derivative:
\begin{align*}
(-1)((\alpha (s'_2)^{l_2} (t'_1)^{n_1}\dots (t'_m)^{n_m})s^{l_1}_1  - 1)+\frac{s_1}{l_1}((l_1\alpha(s'_2)^{l_2} (t'_1)^{n_1}\dots (t'_m)^{n_m}))s_1^{l_1-1})=1.
\end{align*}
We can use the same argument in the situation when other numbers from $\{l_1,l_2,n_1,\dots,n_m\}$ are greater than $0$.
The case when $char(k)\neq 0$ completely mimics the corresponding part in the proof of \cite[Proposition 4.3]{NFAugoTK}, and is based on the same formal derivative computation we did in the case when $char(k)=0$.
\end{proof}

Proposition \ref{stabilizedobserv} implies that $\tilde{\varepsilon}$ is not induced by an embedded Maslov number 0 spin exact Lagrangian filling $\tilde{L}$ of $\Sigma_{S^{1}}\Lambda_n$. The same way Proposition \ref{augeqaug0}, Proposition \ref{stabilizedobserv} and Remark \ref{stabh1} imply that $\tilde{\varepsilon}$, which is an augmentation of  $\Sigma_{S^{1}} \dots \Sigma_{S^{1}}\Lambda_n$  defined by the inductive application of $\pi^{\ast}$ to $\varepsilon_{\Lambda_n}$, is not induced by an embedded Maslov number 0 spin exact Lagrangian filling $\tilde{L}$ of $\Sigma_{S^{1}} \dots \Sigma_{S^{1}}\Lambda_n$.
\end{proof}

\begin{remark}
Note that instead of the $S^1$-spinning construction, one can apply the $S^k$-spinning construction, $k\geq 2$, to Classes A and B. In order for the  arguments from Sections \ref{HDAOCA} and \ref{NonSeiClBHD} to work, one needs to know the  analogue of  K\"{u}nneth formula for high dimensional spuns, i.e. analogue of  \cite[Theorem 4.1, Remark 4.2]{GolovkoRizellsphsp}. This  analogue of K\"{u}nneth formula is expected to appear in the forthcoming work of Strako\v{s} \cite{Strakos}.
\end{remark}

\section*{Acknowledgements}
This  project started when the author attended the Differential Geometry and Applications 2022 conference in Hradec Kr\'{a}lov\'{e}. The author would like to thank the organizers of that conference for the hospitality. In addition, the author is grateful to Russell Avdek, Georgios Dimitroglou Rizell, Honghao Gao and Filip  Strako\v{s} for the very helpful  discussions. The author is very grateful to Dan Rutherford for discussing the computation of the linearized complex with $\Z$-coefficients in Section \ref{NonSeiClBHD}. Besides that, the author would like to thank the anonymous referee for suggesting quite a few useful improvements and for finding the gap in one of the arguments of the previous version of the paper.
The author is supported by the GA\v{C}R EXPRO Grant 19-28628X.

\color{black}

\end{document}